\newtheorem{theorem}{Theorem}[section]
\newtheorem{proposition}[theorem]{Proposition}
\newtheorem{corollary}[theorem]{Corollary}
\theoremstyle{definition}
\newtheorem{remark}[theorem]{Remark}
\newcommand\ZZ{\mathbb{Z}}
\newcommand\RR{\mathbb{R}}
\newcommand\Aut{\operatorname{Aut}}
\newcommand\GL{\operatorname{GL}}
\newcommand\SL{\operatorname{SL}}
\newcommand\Sp{\operatorname{Sp}}
\newcommand\St{\operatorname{St}}
\DeclareMathOperator{\id}{id}
\numberwithin{equation}{section}
\title[On an action of the braid group $B_{2g+2}$ on the free group~$F_{2g}$]
{On an action of the braid group $B_{2g+2}$\\ 
on the free group~$F_{2g}$}
\author{Christian Kassel}
\address{Christian Kassel: 
Institut de Recherche Math\'e\-ma\-tique Avanc\'ee,
CNRS \& Universit\'e de Strasbourg,
7 rue Ren\'{e} Descartes, 67084 Strasbourg, France}
\email{kassel@math.unistra.fr}
\urladdr{www-irma.u-strasbg.fr/\raise-2pt\hbox{\~{}}kassel/}
\dedicatory{En mi'm alte Kumpel Christophe Reutenauer}
\keywords{Braid group, free group, symplectic group, ramified covering}
\subjclass[2010]{Primary 20F36, 20E05, 20H05; Secondary 11F46, 57M07, 57M12}
\begin{document}

\begin{abstract}
We construct an action of the braid group~$B_{2g+2}$ on the free group~$F_{2g}$
extending an action of~$B_4$ on~$F_2$ introduced earlier by Reutenauer and the author.
Our action induces a homomorphism from~$B_{2g+2}$ into the symplectic modular group~$\Sp_{2g}(\ZZ)$.
In the special case $g=2$ we show that the latter homomorphism is surjective 
and determine its kernel, thus obtaining a braid-type presentation of~$\Sp_4(\ZZ)$.
\end{abstract}

\maketitle

\section{Introduction}

In~\cite{KR} Christophe Reutenauer and the present author considered 
the automorphisms $G$, $D$, $\widetilde{G}$, $\widetilde{D}$
of the free group~$F_2$ on two generators $a$ and~$b$ defined by
\begin{equation}\label{def-R}
\begin{matrix}
G: (a,b) \mapsto (a,ab) \, , &\qquad& D: (a,b) \mapsto (ba,b) \, , \\
\widetilde{G}: (a,b) \mapsto (a,ba) \, , &\qquad& \widetilde{D}: (a,b) \mapsto (ab,b) \, ,
\end{matrix}
\end{equation}
(see also~\cite[Sect.\ 2.2.2]{Lo}).
Their images under the natural surjection (the abelianization map) 
$\pi: \Aut(F_2) \to \Aut(\ZZ^2) = \GL_2(\ZZ)$ 
are the matrices
\begin{equation}\label{AB}
\pi(G) = \pi(\widetilde{G}) = A = 
\begin{pmatrix}
1 &  1 \\
0 & 1
\end{pmatrix}
\quad\text{and}\quad
\pi(D) = \pi(\widetilde{D}) = B = 
\begin{pmatrix}
1 & 0 \\
1 & 1
\end{pmatrix} .
\end{equation}
The matrices $A$ and $B$ generate the 
subgroup~$\SL_2(\ZZ)$
and satisfy the braid relation
\begin{equation*}
A B^{-1} A =  B^{-1} A B^{-1} \, .
\end{equation*}

In \cite[Lemma\,2.1]{KR} we observed that 
$G$, $D$, $\widetilde{G}$, $\widetilde{D}$ 
satisfy similar braid relations in the automorphism group~$\Aut(F_2)$, namely
\begin{equation*}
G D^{-1} G  = D^{-1} G D^{-1}\, , \quad
\widetilde{G} D^{-1} \widetilde{G}  = D^{-1} \widetilde{G} D^{-1}
\end{equation*}
\begin{equation*}
\widetilde{G} \widetilde{D}^{-1} \widetilde{G}  = \widetilde{D}^{-1} \widetilde{G} \widetilde{D}^{-1}\, , \quad
G \widetilde{D}^{-1} G = \widetilde{D}^{-1} G \widetilde{D}^{-1} \, ,
\end{equation*}
together with the commutation relations
\begin{equation*}
G \widetilde{G} = \widetilde{G} G 
\quad\text{and}\quad 
D \widetilde{D} = \widetilde{D} D \, .
\end{equation*}
These relations allowed us to define a group homomorphism $f$ from the braid group~$B_4$ on four strands
to~$\Aut(F_2)$ by
\begin{equation}\label{def-f}
f(\sigma_1) = G \, , \quad f(\sigma_2) = D^{-1} \, , \quad f(\sigma_3) = \widetilde{G} \, ,
\end{equation}
where $\sigma_1, \sigma_2, \sigma_3$ are the standard generators of~$B_4$.
In~\cite{KR} we proved that the image $f(B_4)$ of~$f$ is the 
index-two subgroup  
$\pi^{-1}(\SL_2(\ZZ))$ of~$\Aut(F_2)$
and that the kernel of~$f$ is the center of~$B_4$.


After the article~\cite{KR} was published, 
Etienne Ghys suggested that the action  of~$B_4$ on~$F_2$ given by\,\eqref{def-f} might be derived 
from the fact that a punctured torus is a double covering of a disk branched over four points.
We checked that this fact indeed led to\,\eqref{def-f}. 

The present article is a continuation of~\cite{KR};
it is based on the fact that a punctured surface of arbitrary genus~$g$ can be realized
as a ramified double covering of a disk with $2g+2$ ramification points. 
Our main result yields
an action of the braid group~$B_{2g+2}$ on $2g+2$~strands on the free group~$F_{2g}$ on $2g$~generators:
this action is given by an explicit group homomorphism
$f: B_{2g+2} \to \Aut(F_{2g})$, extending\,\eqref{def-f}.

The formulas for this homomorphism are given in Section~\ref{main};
we show in Section~\ref{geom} how to derive them geometrically.
In~Section~\ref{monoid}, in an attempt to define higher analogues of the \emph{special Sturmian monoid}, 
introduced in~\cite{KR},
we search for automorphisms in $f(B_{2g+2}) \subset \Aut(F_{2g})$ 
that preserve the free monoid on~${2g}$ generators. 
It turns out that the situation for $g\geq 2$ is less satisfactory than for $g=1$.

Finally, in Section~\ref{image} we first observe that the image $f(B_{2g+2})$ maps
under the abelianization map $\pi: \Aut(F_{2g}) \to \GL_{2g}(\ZZ)$ 
into the symplectic modular group~$\Sp_{2g}(\ZZ)$.
Concentrating on the case $g=2$, we show that the map
$\pi \circ f : B_6 \rightarrow \Sp_{4}(\ZZ) $
is surjective and we determine its kernel;
we thus obtain a braid-type presentation of~$\Sp_4(\ZZ)$ 
with generators the standard generators of~$B_6$ 
and with relations the usual braid relations 
together with four additional relations.

\section{The main result}\label{main}

Let $g$ be an integer $\geq 1$ and $F_{2g}$ be the \emph{free group} on $2g$~generators
$a_1, \ldots, a_g$, $b_1, \ldots, b_g$. 

\subsection{A family of automorphisms of~$F_{2g}$}

We consider the $2g+1$ automorphisms $u_1, \ldots, u_{2g+1}$ of~$F_{2g}$ defined as follows.

\begin{itemize}

\item 
The automorphism~$u_1$ fixes all generators, except~$b_1$ for which we have
\begin{equation}\label{u1}
u_1(b_1) = a_1 b_1 \, .
\end{equation}

\item
The automorphism~$u_{2g+1}$ fixes all generators, except~$b_g$ for which
\begin{equation}\label{ulast}
u_{2g+1}(b_g) = b_g a_g \, .
\end{equation}

\item
For $i = 1, \ldots, g$, the automorphism~$u_{2i}$ fixes all generators, except~$a_i$ for which
\begin{equation}\label{ueven}
u_{2i}(a_i) = b_i^{-1} a_i \, .
\end{equation}

\item
For $i = 1, \ldots, g-1$, the automorphism~$u_{2i+1}$ fixes all generators, except~$b_i$ 
and~$b_{i+1}$ for which we have
\begin{equation}\label{uodd}
u_{2i+1}(b_i) = b_i a_i a_{i+1}^{-1}
\quad\text{and}\quad
u_{2i+1}(b_{i+1}) = a_{i+1} a_i^{-1} b_{i+1}\, .
\end{equation}

\end{itemize}

When $g= 1$, the above $2g+1$ automorphisms reduce to three, namely $u_1, u_2, u_3$; these coincide 
respectively with the automorphisms $G, D^{-1}, \widetilde{G}$ of~$F_2$ defined by\,\eqref{def-R}.

\subsection{An action of the braid group~$B_{2g+2}$}

Let $B_{2g+2}$ be the {braid group} on $2g+2$~strands
with its standard presentation by generators $\sigma_1, \ldots, \sigma_{2g+1}$
satisfying the relations (where $1 \leq i,j \leq 2g+1$)
\begin{equation}\label{braid1}
\sigma_i \sigma_j  = \sigma_j  \sigma_i  \qquad \text{if} \; |i-j| > 1
\end{equation}
and 
\begin{equation}\label{braid2}
\sigma_i \sigma_j \sigma_i  = \sigma_j \sigma_i \sigma_j \quad \text{if} \; |i-j| = 1 \, .
\end{equation}

We now state our main result:
it yields an action of~$B_{2g+2}$ on the free group~$F_{2g}$ by group automorphisms.

\begin{theorem}\label{thm-main}
There is a group
homomorphism $f : B_{2g+2} \to \Aut(F_{2g})$ such that
$f(\sigma_i) = u_i$ for all $i = 1, \ldots, 2g+1$.
\end{theorem}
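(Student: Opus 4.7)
The plan is to invoke the universal property of the standard presentation of $B_{2g+2}$: since the defining relations are exactly \eqref{braid1} and \eqref{braid2}, it suffices to show that the $u_i$ are automorphisms of $F_{2g}$ and that they satisfy the analogous relations $u_i u_j = u_j u_i$ for $|i-j|>1$ and $u_i u_{i+1} u_i = u_{i+1} u_i u_{i+1}$ for $1 \leq i \leq 2g$. That each $u_i$ is an automorphism is immediate, since each is a Nielsen-type transformation modifying one or two generators; an explicit two-sided inverse is readily written down (for instance $u_1^{-1}(b_1) = a_1^{-1} b_1$, and similarly for the others).

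For the commutation relations I would organize the $u_i$ by parity of index. Most pairs $u_i, u_j$ with $|i-j| > 1$ either have disjoint supports or have supports whose overlap involves only a generator that does not appear in the defining formula of the other automorphism, and such pairs commute automatically. The only pairs requiring genuine verification are those of the form $(u_{2k-1}, u_{2k+1})$, both of which modify $b_k$. For each such pair one computes $u_{2k-1} u_{2k+1}(b_k)$ and $u_{2k+1} u_{2k-1}(b_k)$ and checks that the resulting words in $a_{k-1}, a_k, a_{k+1}, b_k$ coincide; this is a short calculation (and a slight variant at the left boundary $k=1$).

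For the braid relations, the case $i = 1$ is already given by \cite[Lemma\,2.1]{KR}. The symmetric boundary case $u_{2g} u_{2g+1} u_{2g} = u_{2g+1} u_{2g} u_{2g+1}$ is analogous in shape and follows from a direct computation on $a_g$ and $b_g$. The remaining \emph{interior} relations
\[
u_{2i-1} u_{2i} u_{2i-1} = u_{2i} u_{2i-1} u_{2i} \quad (2 \leq i \leq g), \qquad u_{2i} u_{2i+1} u_{2i} = u_{2i+1} u_{2i} u_{2i+1} \quad (1 \leq i \leq g-1),
\]
are each supported on the six generators $a_{i-1}, a_i, a_{i+1}, b_{i-1}, b_i, b_{i+1}$, so each reduces to evaluating both sides on each of these generators in a free subgroup of rank at most six.

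There is no real conceptual obstacle: the proof is a finite case check. The main effort lies in organizing the case analysis cleanly, especially for the interior braid relations, which are the longest to write out. The key simplification is that every relation we must verify is localized to a bounded set of generators, so each verification stays small and mechanical.
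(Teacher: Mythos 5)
Your proposal is correct and takes essentially the same route as the paper, whose entire proof consists of the remark that it suffices to verify that $u_1,\ldots,u_{2g+1}$ satisfy Relations~\eqref{braid1} and~\eqref{braid2}; your organization of the case check (trivially commuting pairs, the genuinely overlapping pairs $(u_{2k-1},u_{2k+1})$ on $b_k$, the boundary braid relations reducing to \cite[Lemma 2.1]{KR}, and the interior ones checked on a bounded set of generators) is a sound way to carry out exactly that verification.
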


The proof is straightforward: it suffices to check that the automorphisms
$u_1, \ldots,$ $u_{2g+1}$ satisfy Relations~\eqref{braid1} and~\eqref{braid2}.
(Formulas~\eqref{u1}--\eqref{uodd} and Theorem\,\ref{thm-main} 
were first publicized in~\cite[Exercise\,1.5.2]{KT}.)

For $g=1$, the homomorphism of Theorem~\ref{thm-main} 
coincides with the homomorphism $f: B_4 \to \Aut(F_2)$
of~\cite[Lemma\,2.5]{KR} defined in the introduction. 
In \emph{loc.\ cit}.\ we showed that its kernel is exactly the center of~$B_4$.
For $g>1$ we have the following weaker result.

\begin{proposition}\label{vanish}
The kernel of $f : B_{2g+2} \to \Aut(F_{2g})$ contains the center of~$B_{2g+2}$. 
\end{proposition}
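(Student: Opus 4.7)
The center of $B_{2g+2}$ is infinite cyclic, generated by $z = (\sigma_1 \sigma_2 \cdots \sigma_{2g+1})^{2g+2}$, so it suffices to prove $f(z) = \id$. The plan is to obtain a closed form for $c := f(\sigma_1 \sigma_2 \cdots \sigma_{2g+1}) = u_1 u_2 \cdots u_{2g+1}$ and then verify directly that $c^{2g+2} = \id$ in $\Aut(F_{2g})$.

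First I would compute $c$ by induction on the partial products $u_k u_{k+1} \cdots u_{2g+1}$, peeled off from the right, using the defining formulas \eqref{u1}--\eqref{uodd}. Guided by the model calculation for $g=1$ in \cite{KR}, one expects
\[
c(a_j) = b_j^{-1} b_{j-1}^{-1} \cdots b_1^{-1}, \qquad c(b_j) = a_j a_{j+1}^{-1}
\]
for $j = 1, \ldots, g$, with the convention $a_{g+1} := 1$ (so $c(b_g) = a_g$). At each inductive step only one $u_i$ acts, and the simplifications are entirely local.

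Substituting the formula for $c$ into itself yields the much cleaner
\[
c^2(b_j) = b_{j+1} \;\; (j < g), \quad c^2(b_g) = b_g^{-1} b_{g-1}^{-1} \cdots b_1^{-1},
\]
together with $c^2(a_j) = a_{j+1} a_1^{-1}$ for $j < g$ and $c^2(a_g) = a_1^{-1}$. One then observes that the orbit of each $b_j$ under $\langle c^2 \rangle$ is a cycle of length $g+1$, namely $b_1 \to b_2 \to \cdots \to b_g \to b_g^{-1} \cdots b_1^{-1} \to b_1$ (the last step uses the elementary identity $c^2(b_1 b_2 \cdots b_g) = b_1^{-1}$, which follows by telescoping). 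Consequently $c^{2(g+1)}$ fixes each $b_j$. Since $c^{2(g+1)}$ commutes with $c$ and fixes $b_1, \ldots, b_g$, it must also fix $c(a_j) = b_j^{-1} \cdots b_1^{-1}$, and hence fix $a_j$ itself. This gives $c^{2g+2} = \id$, as desired.

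The main obstacle is the closed-form computation of $c$: one has to track how each of the $2g+1$ factors $u_i$ modifies the running image of every generator, and several simultaneous substitutions must cancel in just the right way. Once that formula is in hand, the orbit analysis for $c^2$ and the commutation argument deducing the action on the $a_j$ from the action on the $b_j$ are essentially immediate.
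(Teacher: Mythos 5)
Your proposal is correct and follows essentially the same route as the paper: compute $\widetilde{\delta}=f(\sigma_1\cdots\sigma_{2g+1})$ and $\widetilde{\delta}^2$ in closed form (your formulas agree exactly with the paper's) and deduce that $\widetilde{\delta}^{2g+2}=\id$. The only difference is that you make explicit the final step the paper leaves as ``easy to conclude,'' via the length-$(g+1)$ orbit of the $b_j$ under $\widetilde{\delta}^2$ and the injectivity of $\widetilde{\delta}$ to recover the action on the $a_j$, which is a valid and tidy way to finish.
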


\begin{proof}
Let $\delta = \sigma_1\cdots \sigma_{2g+1}$. It is well known (see \cite{Bi2, KT})
that the center of~$B_{2g+2}$ is generated by~$\delta^{2g+2}$. 
Let $\widetilde{\delta} = f(\delta) = u_1 \cdots u_{2g+1}$ be the corresponding automorphism
of~$F_{2g}$. Using~\eqref{u1}--\eqref{uodd}, it is easy to check that
for all $i= 1, \ldots, g$ we have
\begin{equation*}
\widetilde{\delta}(a_i) = (b_1 \ldots b_i)^{-1}
\quad\text{and}\quad
\widetilde{\delta}(b_i) = 
\begin{cases}
a_i a_{i+1}^{-1} \;\;\; \text{if}\; i \neq g \, ,\\
a_g \qquad \;\; \text{if}\; i = g \, .
\end{cases}
\end{equation*} 
We deduce that
\begin{equation*}
\widetilde{\delta}^2(a_i) = 
\begin{cases}
a_{i+1}a_1^{-1} \;\;\; \text{if}\; i \neq g \, ,\\
a_1^{-1} \qquad \;\;\, \text{if}\; i = g \, ,
\end{cases}
\;\text{and}\quad
\widetilde{\delta}^2(b_i) = 
\begin{cases}
b_{i+1} \qquad\qquad \text{if}\; i \neq g \, ,\\
(b_1 \ldots b_g)^{-1} \;\;\,  \text{if}\; i = g \, .
\end{cases}
\end{equation*}
From these formulas it is easy to conclude that $\widetilde{\delta}^{2g+2}$ is the identity.
\end{proof}

\subsection{Preserving the free submonoid~$M_{2g}$ of~$F_{2g}$}\label{monoid}

Let $M_{2g}$ be the submonoid of~$F_{2g}$ generated by $a_1, \ldots, a_g$, $b_1, \ldots, b_g$:
it is a free monoid.
By construction the automorphisms $u_1$ and $u_{2g+1}$ preserve~$M_{2g}$.
So do the inverses~$u_{2i}^{-1}$ of the automorphisms~$u_{2i}$ since
\begin{equation*}
u_{2i}^{-1}(a_i) = b_i a_i \, ,
\end{equation*}
the other generators being fixed.

Unfortunately, as we can see from~\eqref{uodd},
the automorphisms $u_{2i+1}$ ($1 \leq i \leq g-1$), which exist only when $g\geq 2$, 
do not preserve the monoid~$M_{2g}$. Nor do their inverses
since
\begin{equation*}
u_{2i+1}^{-1}(b_i) = b_i a_{i+1} a_i^{-1} 
\quad\text{and}\quad
u_{2i+1}^{-1}(b_{i+1}) = a_i a_{i+1}^{-1} b_{i+1}\, .
\end{equation*}
Thus, the action of the higher braid groups~$B_{2g+2}$ on~$F_{2g}$ with $g\geq 2$ 
is quite different from the action of~$B_4$ on~$F_2$ 
when it comes to preserving the monoid~$M_{2g}$.

Let us consider the case $g=1$.
In~\cite{KR} we observed that the $M_2$-preserving automorphisms $u_1 = G$, 
$u_2^{-1} = D$, $u_3 = \widetilde{G}$ of~$F_2$
are so-called {Sturmian morphisms}.
Together with the Sturmian morphism~$\widetilde{D}$,
the morphisms $G, D, \widetilde{G}$ generate the {special Sturmian monoid}~$\St_0$, 
for which we gave a presentation by generators and relations, 
and which we proved to be isomorphic to the submonoid of~$B_4$ 
generated by 
$\sigma_1$, $\sigma_2^{-1}$, $\sigma_3$, and 
$(\sigma_1 \sigma_3^{-1}) \, \sigma_2^{-1} \, (\sigma_1 \sigma_3^{-1})^{-1}$.
See~\cite[Sect.~3]{KR} for details.

In the case $g \geq 2$, consider the submonoid~$\Omega_{2g}$ of~$\Aut(F_{2g})$ generated by
the $g+2$ elements
$u_1$, $u_{2g+1}$, and $u_{2i}^{-1}$ ($1 \leq i \leq g$). It follows from the observation above that
all elements of~$\Omega_{2g}$ preserve the monoid~$M_{2g}$.

We now express~$\Omega_{2g}$ in terms of the free monoid~$M_2$ on two generators
and the free monoid~$M_1$ on one generator, which we identify with the monoid of non-negative integers.

\begin{proposition}
Let $g\geq 2$.
We have an isomorphism of monoids
\begin{equation*}
\Omega_{2g} \cong M_2 \times (M_1)^{g-2} \times M_2 \, .
\end{equation*}
Moreover, $\Omega_{2g}$ is isomorphic to the submonoid of~$B_{2g+2}$ generated by  
$\sigma_1$, $\sigma_{2g+1}$, and $\sigma_{2i}^{-1}$ ($1 \leq i \leq g$).
\end{proposition}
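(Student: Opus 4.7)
The strategy is to organize the $g+2$ generators of~$\Omega_{2g}$ into three mutually commuting \emph{blocks}, each acting only on a single disjoint pair of generators of~$F_{2g}$: block~$1 = \{u_1, u_2^{-1}\}$ acting on $(a_1,b_1)$; one ``middle'' block $\{u_{2i}^{-1}\}$ for each $i=2,\ldots,g-1$ acting on $(a_i,b_i)$; and block~$g = \{u_{2g}^{-1}, u_{2g+1}\}$ acting on $(a_g,b_g)$. Inspection of formulas~\eqref{u1}, \eqref{ulast}, and~\eqref{ueven} shows that each such generator modifies just one generator of~$F_{2g}$, using only generators from the same pair, so generators in distinct blocks commute. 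This gives a well-defined, surjective monoid homomorphism
\[
\varphi: M_2 \times (M_1)^{g-2} \times M_2 \longrightarrow \Omega_{2g}
\]
sending the factors' free generators to the corresponding block generators.

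For injectivity of~$\varphi$, I would recover each component of the source from the image $F = \varphi(w_1, n_2, \ldots, n_{g-1}, w_g)$ by evaluation at a suitable generator. Since all generators outside a given block fix its $(a_i,b_i)$, the value $F(a_i) = (u_{2i}^{-1})^{n_i}(a_i) = b_i^{n_i} a_i$ determines $n_i$ for a middle block, while $F(a_1), F(b_1)$ (resp.\ $F(a_g), F(b_g)$) depend only on $w_1$ (resp.~$w_g$). It remains to argue that $\{u_1, u_2^{-1}\}$ and $\{u_{2g}^{-1}, u_{2g+1}\}$ each freely generate a rank-$2$ free monoid; this reduces via the abelianization $\pi$ to the classical fact that the matrices $A, B$ of~\eqref{AB} freely generate a free submonoid of~$\SL_2(\ZZ)$ (as seen, e.g., by uniqueness of continued-fraction expansions).

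For the second isomorphism, I apply the same block decomposition to the submonoid $S \subset B_{2g+2}$ generated by $\sigma_1$, $\sigma_{2g+1}$, and the $\sigma_{2i}^{-1}$. The three blocks now involve braid indices $\{1,2\}$, $\{2i\}$ for $2 \leq i \leq g-1$, and $\{2g, 2g+1\}$, and for $g \geq 2$ any two indices in distinct blocks differ by at least~$2$. By relation~\eqref{braid1} the blocks commute in~$B_{2g+2}$, yielding a surjective homomorphism $\psi: M_2 \times (M_1)^{g-2} \times M_2 \to S$. Since $f|_S \circ \psi = \varphi$ is injective, both $\psi$ and $f|_S$ are injective, giving the chain of isomorphisms $S \cong M_2 \times (M_1)^{g-2} \times M_2 \cong \Omega_{2g}$.

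The only non-routine ingredient is the freeness of the two rank-$2$ blocks, and I expect this to be the main (albeit mild) obstacle: it cannot be deduced from commutation arguments alone, and must be imported from the known structure of~$\SL_2(\ZZ)$ via the abelianization. Everything else in the argument is bookkeeping made clean by the block decomposition.
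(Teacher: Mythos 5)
Your proposal is correct and follows essentially the same route as the paper: a normal form coming from the commutation of the three blocks, reduction of the freeness of the two rank-$2$ end blocks to the classical freeness of the monoid generated by the matrices $A$ and $B$ of~\eqref{AB} via the abelianization $\pi$, and the observation that $f$ retracts the corresponding braid submonoid onto $\Omega_{2g}$ (you build the section from the product into $B_{2g+2}$, the paper builds $\iota:\Omega_{2g}\to B_{2g+2}$, but the argument is the same). The only cosmetic differences are that you justify the block commutation by disjointness of supports rather than by the relations~\eqref{braid1} satisfied by the $u_i$, and that you recover the middle exponents by evaluating at $a_i$.
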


\begin{proof}
In view of the commutation relations~\eqref{braid1} for the automorphisms~$u_i$,
any product of $u_1$, $u_2^{-1}$, $u_4^{-1}$, \ldots, $u_{2g-2}^{-1}$, 
$u_{2g}^{-1}$, $u_{2g+1}$ can be uniquely written as
\begin{equation*}
w(u_1, u_2^{-1}) \, u_4^{-n_2} \cdots u_{2g-2}^{-n_{g-1}}  \, w'(u_{2g}^{-1},u_{2g+1}) \, ,
\end{equation*}
where $w(u_1, u_2^{-1})$ belongs to the submonoid of~$\Aut(F_{2g})$ generated by  
$u_1$ and $u_2^{-1}$, the exponents $n_2, \ldots, n_{g-1}$ are non-negative integers, and
$w'(u_{2g}^{-1}, u_{2g+1})$ belongs to the submonoid of~$B_{2g+2}$ generated by $u_{2g}^{-1}$
and $u_{2g+1}$.
It remains to show that the submonoid generated by $u_1$, $u_2^{-1}$ 
and the submonoid generated by $u_{2g}^{-1}$, $u_{2g+1}$  are both isomorphic to the free monoid~$M_2$.
We give a proof of this claim for the first submonoid (there is a similar proof for the second one). 
Since $u_1$ and $u_2^{-1}$ move only the generators~$a_1$ and $b_1$, we may consider them in~$\Aut(F_2)$.
Now let $w(u_1, u_2^{-1})$ be a non-trivial word in $u_1$, $u_2^{-1}$ and consider its image in~$\GL_2(\ZZ)$;
we have
\begin{equation*}
\pi \left(w(u_1, u_2^{-1}) \right) = w(A, B) \, 
\end{equation*}
where $A$ and $B$ are the matrices defined by~\eqref{AB}. It is well known (and easy to check) that
any non-trivial word in~$A$, $B$ cannot be the identity matrix.
Therefore, $w(u_1, u_2^{-1}) \neq 1$ in~$\Aut(F_2)$.
This proves our claim.

Set $\iota(u_1) = \sigma_1$, $\iota(u_{2g+1}) = \sigma_{2g+1}$, and $\iota(u_{2i}^{-1}) = \sigma_{2i}^{-1}$
for $i = 1, \ldots, g$.
In view of the first assertion and the braid relations~\eqref{braid1}--\eqref{braid2}, 
these formulas define a monoid homomorphism $\iota : \Omega_{2g} \to B_{2g+2}$.
Since $f \circ \iota = \id$ on~$\Omega_{2g}$, the homomorphism $\iota$ is injective, 
which proves the second assertion.
\end{proof}

\section{Ramified double coverings of the disk}\label{geom}

We now explain how we found Formulas\,\eqref{u1}--\eqref{uodd} which define the automorphisms
$u_1, \ldots, u_{2g+1}$ of Section\,\ref{main}.
The material in this section is standard; we nevertheless give details for the sake of non-topologists.

It is well known that any closed surface~$\Sigma_g$ of genus~$g>0$ can be realized 
as a ramified double covering of the sphere~$S^2$ with $2g+2$ ramification points.
It suffices to embed~$\Sigma_g$ into~$\RR^3$ in such a way that it is invariant under
the \emph{hyperelliptic involution}, which is the reflexion in a line~$L$
intersecting~$\Sigma_g$ in $2g+2$ points as in Figure\,\ref{fig1}.
The quotient of~$\Sigma_g$ by this involution is a sphere equipped with $2g+2$ distinguished points,
namely the projections of the points of~$\Sigma_g \cap L$; these are
the ramification points of the double covering.

\begin{figure}[ht!]
\labellist
\small\hair 2pt
\pinlabel $D$ [r] at 273 153
\pinlabel $\ast$ [r] at 295 142
\pinlabel $D'$ [r] at 273 40
\pinlabel $a_i$ [r] at 338 123
\pinlabel $b_i$ [r] at 483 155
\pinlabel {\scriptsize{$i$-th hole}} [r] at 423 95
\pinlabel {\scriptsize{first hole}} [r] at 196 95
\pinlabel $L$ [r] at 0 95
\endlabellist
\centering
\includegraphics[scale=0.45]{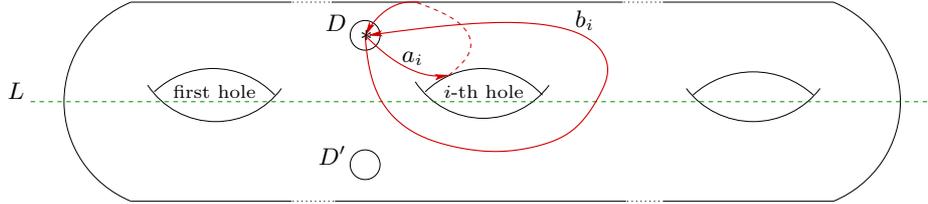}
\caption{A surface~$\Sigma_g$ invariant under the hyperelliptic involution.}
\label{fig1}
\end{figure}

From the interior of~$\Sigma_g - \Sigma_g \cap L$ remove
a small open disk~$D$ with center~$P$ (represented by~$\ast$ in the figures), 
as well as the disk~$D'$
obtained from~$D$ under the reflection in~$L$.
Then $\Sigma_g^{\circ} = \Sigma_g - (D \cup D')$ is a ramified double covering of the sphere deprived of a disk, 
in other words, a double covering of a disk~$D_0$ with $2g+2$ ramification points. 

As is well known (see \cite{Bi2} or~\cite[Sect.~1.6]{KT}),
the braid group~$B_{2g+2}$ is isomorphic to the mapping class group of~$D_0$
consisting of the isotopy classes of orientation-preserving homeomorphisms
that fix each point of the boundary of~$D_0$ and permute the $2g+2$ distinguished points.
If $\varphi$ is such an homeomorphism, 
we pick the lift~$\widetilde{\varphi}$ of~$\varphi$ to~$\Sigma_g^{\circ}$ that fixes $D$ and~$D'$ pointwise.
The correspondence $\varphi \mapsto \widetilde{\varphi}$ induces a homomorphism
from~$B_{2g+2}$ to the mapping class group of~$\Sigma_g^{\circ}$, hence
a homomorphism $B_{2g+2} \to \Aut(\Pi)$, 
where $\Pi$ is the fundamental group of~$\Sigma_g^{\circ}$, which is a free group.
We wish to determine this homomorphism.

It is enough to determine the action of the generators of~$B_{2g+2}$ on a smaller
free group, namely the fundamental group of $\Sigma_g - D'$, whose elements we represent
as loops in $\Sigma_g - D'$ based at the point~$P$.
This fundamental group is the free group~$F_{2g}$ 
generated by the loops~$a_1, \ldots, a_g$, $b_1, \ldots, b_g$ as depicted in Figure~\ref{fig1}.

\begin{figure}[ht!]
\labellist
\small\hair 2pt
\pinlabel $C_0$ [r] at 63 53
\pinlabel $C_1$ [r] at 253 53
\pinlabel $C_2$ [r] at 413 53
\pinlabel $C_g$ [r] at 683 53
\pinlabel $C'_1$ [r] at 163 153
\pinlabel $C'_2$ [r] at 333 153
\pinlabel $C'_g$ [r] at 583 153
\endlabellist
\centering
\includegraphics[scale=0.45]{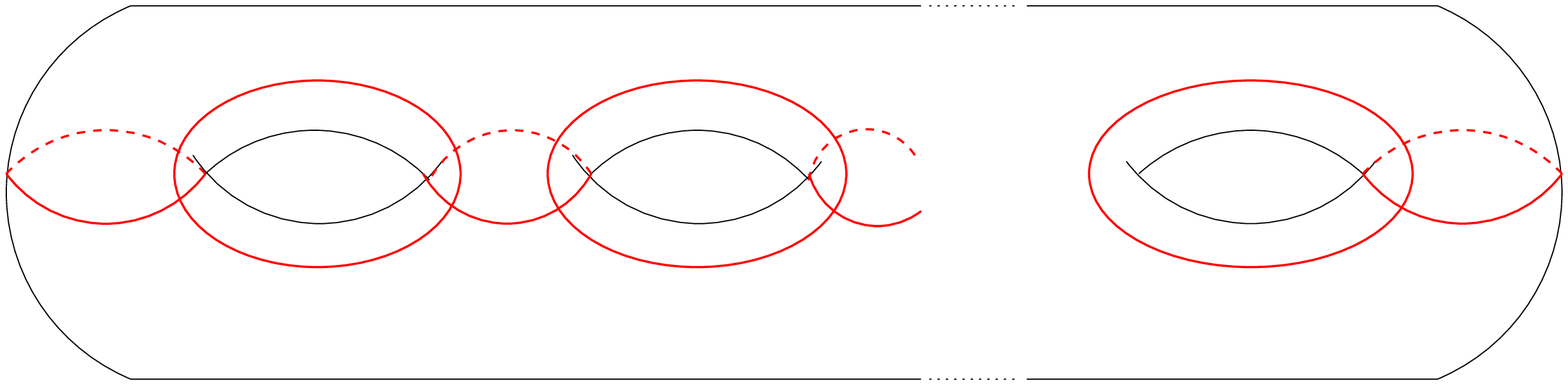}
\caption{The curves $C_i$ and $C'_i$.}
\label{fig2}
\end{figure}

Consider the curves $C_0, C_1, \ldots, C_g$, $C'_1, \ldots, C'_g$ of Figure~\ref{fig2}.
It is easy to check that a lift~$\widetilde{\sigma_1}$ of the homeomorphism of~$D_0$
representing the generator~$\sigma_1$ of~$B_{2g+2}$
is the Dehn twist~$T_0$ along the curve~$C_0$
(for a definition of Dehn twists, see~\cite[Sect.\,3.2.4]{KT}).
The action of~$T_0$ on the generators of the fundamental group of~$\Sigma_g - D'$
is easy to compute; clearly it leaves the generators $a_1, \ldots, a_g$
as well as $b_2, \ldots, b_g$ unchanged. 
On~$b_1$ it acts as in Figure~\ref{fig3}, which leads to Formula~\eqref{u1}.

\begin{figure}[ht!]
\labellist
\small\hair 2pt
\pinlabel $P$ [r] at 389 206
\pinlabel $\ast$ [r] at 366 190
\pinlabel {\scriptsize{first hole}} [r] at 267 122
\endlabellist
\centering
\includegraphics[scale=0.45]{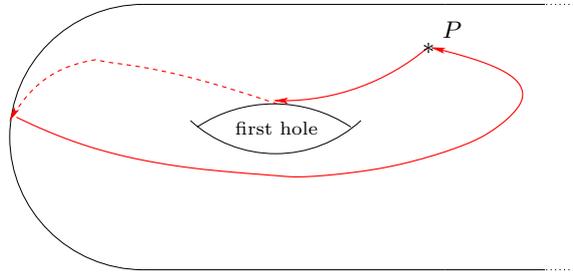}
\caption{Action of the Dehn twist $T_0$ on~$b_1$.}
\label{fig3}
\end{figure}

Similarly, a lift~$\widetilde{\sigma_{2g+1}}$ of the homeomorphism representing the last generator~$\sigma_{2g+1}$ 
of~$B_{2g+2}$ is the Dehn twist~$T_g$ along the curve~$C_g$. 
This twist fixes all generators of the fundamental group except~$b_g$.
The reader is encouraged to draw the corresponding figure and derive~\eqref{ulast} from it.

A lift~$\widetilde{\sigma_{2i}}$ 
of the homeo\-morphism representing the generator~$\sigma_{2i}$ ($1\leq i \leq  g$)
is the Dehn twist~$T'_i$ along the curve~$C'_i$. 
This twist affects only the generator~$a_i$, on which it acts as in Figure~\ref{fig4};
we thus obtain~\eqref{ueven}.

\begin{figure}[ht!]
\labellist
\small\hair 2pt
\pinlabel $P$ [r] at 298 214
\pinlabel $\ast$ [r] at 277 198
\pinlabel {\scriptsize{$i$-th hole}} [r] at 210 123
\endlabellist
\centering
\includegraphics[scale=0.45]{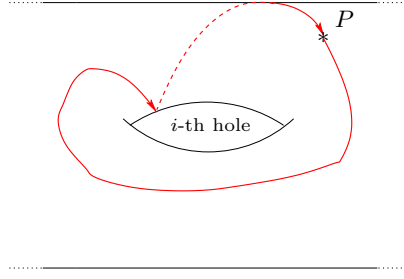}
\caption{Action of the Dehn twist $T'_i$ on~$a_i$ ($1\leq i \leq  g$).}
\label{fig4}
\end{figure}

Finally, when $1\leq i \leq  g-1$, a lift~$\widetilde{\sigma_{2i+1}}$ of the homeomorphism 
representing~$\sigma_{2i+1}$ is the Dehn twist~$T_i$ along the curve~$C_i$. 
This twist fixes all generators of the fundamental group except~$b_i$ and~$b_{i+1}$;
it acts on~$b_i$ as in Figure~\ref{fig5} and on~$b_{i+1}$ as in Figure~\ref{fig6}.
This yields Formula~\eqref{uodd}.

\begin{figure}[ht!]
\labellist
\small\hair 2pt
\pinlabel $P$ [r] at 345 203
\pinlabel $\ast$ [r] at 324 185
\pinlabel {\tiny{$i$-th hole}} [r] at 193 123
\pinlabel {\tiny{$i+1$-st hole}} [r] at 502 122
\endlabellist
\centering
\includegraphics[scale=0.45]{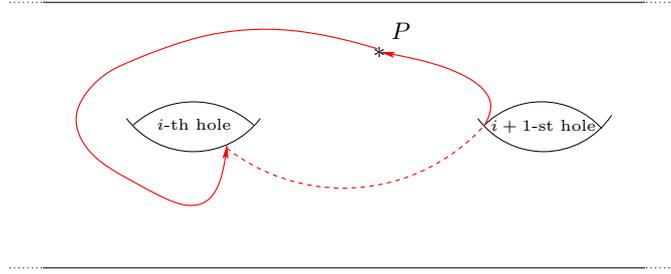}
\caption{Action of the Dehn twist $T_i$ on~$b_i$.}
\label{fig5}
\end{figure}

\begin{figure}[ht!]
\labellist
\small\hair 2pt
\pinlabel $P$ [r] at 308 208
\pinlabel $\ast$ [r] at 298 185
\pinlabel {\tiny{$i$-th hole}} [r] at 163 121
\pinlabel {\tiny{$i+1$-st hole}} [r] at 473 121
\endlabellist
\centering
\includegraphics[scale=0.45]{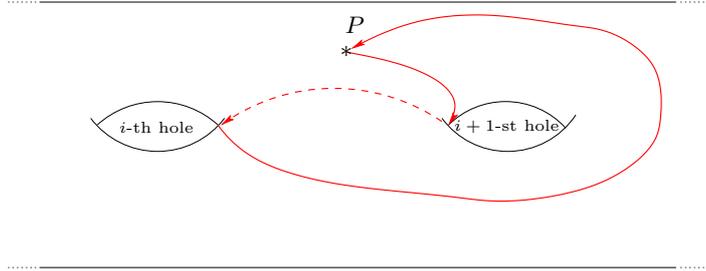}
\caption{Action of the Dehn twist $T_i$ on~$b_{i+1}$.}
\label{fig6}
\end{figure}

The above computation appeared in~\cite{Bi1} with different notation; the correspondence
between that paper and ours is given by $s_i = b_i^{-1}$, $t_i = a_i^{-1}$, 
$h_{C_1} = T_0^{-1}$, $h_{U_i} = T'_i{}^{-1}$, and $h_{Z_i} = T_i^{-1}$
(see also~\cite[Sect.~4]{Go} and~\cite[Sect.~2]{BM}).
A~proof of Proposition~\ref{vanish} can also be derived from Relation~(10) in~\cite{Bi1}.

\section{The symplectic modular group}\label{image}

In this section we first observe that, after abelianizing our action, we obtain
a symplectic action of the braid group~$B_{2g+2}$ on the free abelian group~$\ZZ^{2g}$.
In the second part of the section we elaborate on the case $g=2$
and obtain a braid-type presentation of~$\Sp_4(\ZZ)$.

\subsection{Symplectic automorphisms}

Let $g \geq 1$ be a positive integer. 
Pick a basis $(\bar{a}_1, \ldots, \bar{a}_g, \bar{b}_1, \ldots, \bar{b}_g)$ of~$\ZZ^{2g}$.
Using this basis, we identify the group~$\Aut(\ZZ^{2g})$ of automorphisms of~$\ZZ^{2g}$
with the general linear group~$\GL_{2g}(\ZZ)$.

We equip $\ZZ^{2g}$ with the standard alternating bilinear form $\langle \; ,\, \rangle$
which vanishes on all pairs of basis elements, except the following ones:
\begin{equation*}\label{alternating}
\langle \bar{a}_i, \bar{b}_i \rangle = - \langle \bar{b}_i, \bar{a}_i \rangle = 1
\qquad (i= 1, \ldots, g) \, .
\end{equation*}
The \emph{symplectic modular group}~$\Sp_{2g}(\ZZ)$
(formerly called Siegel's modular group) is the
group of automorphisms of~$\ZZ^{2g}$ preserving this alternating form;
it can be described as the group of matrices $M \in \GL_{2g}(\ZZ)$ such that
\begin{equation}\label{def-sympl}
M^T 
\begin{pmatrix}
0 & I_g \\
- I_g & 0
\end{pmatrix}
M = 
\begin{pmatrix}
0 & I_g \\
- I_g & 0
\end{pmatrix} ,
\end{equation}
where $M^T$ is the transpose of~$M$ and $I_g$ is the identity matrix of size~$g$.

Consider the composition
\begin{equation*}\label{fbar}
\overline{f} : B_{2g+2} \to \GL_{2g}(\ZZ)
\end{equation*}
of the homomorphism $f : B_{2g+2} \to \Aut(F_{2g})$ of Section~\ref{main} with the 
natural surjection
$\pi : \Aut(F_{2g}) \to \GL_{2g}(\ZZ)$.

\begin{proposition}
We have $\overline{f}(B_{2g+2}) \subset \Sp_{2g}(\ZZ)$.
\end{proposition}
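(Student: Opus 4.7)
The plan is to reduce the claim to a check on the generators $\sigma_1,\ldots,\sigma_{2g+1}$ of $B_{2g+2}$, since $\Sp_{2g}(\ZZ)$ is a subgroup of $\GL_{2g}(\ZZ)$; hence it is enough to show that each matrix $\overline{f}(\sigma_i) = \pi(u_i)$ satisfies the defining condition~\eqref{def-sympl}.

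I would compute $\pi(u_i)$ directly from the formulas \eqref{u1}--\eqref{uodd} and recognize each one as a \emph{symplectic transvection} of $\ZZ^{2g}$, that is, a map of the form
\begin{equation*}
T_w : v \longmapsto v - \langle v, w\rangle\, w
\end{equation*}
for a suitable vector $w \in \ZZ^{2g}$. A quick calculation (using bilinearity and antisymmetry of $\langle\;,\,\rangle$) shows that any $T_w$ preserves the alternating form, hence lies in $\Sp_{2g}(\ZZ)$; so once each $\pi(u_i)$ has been exhibited as a transvection, we are done.

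Concretely, I expect to read off the transvection vectors $w_i$ as follows: $w_1 = \bar{a}_1$ for $u_1$, $w_{2g+1} = \bar{a}_g$ for $u_{2g+1}$, $w_{2i} = \bar{b}_i$ for $u_{2i}$ (using $\langle \bar{a}_i, \bar{b}_i\rangle = 1$), and $w_{2i+1} = \bar{a}_i - \bar{a}_{i+1}$ for $u_{2i+1}$. The last case is the only mildly nontrivial one: both $\bar{b}_i$ and $\bar{b}_{i+1}$ are moved, and one must verify that the single transvection $T_{\bar{a}_i - \bar{a}_{i+1}}$ simultaneously reproduces the two substitutions $\bar{b}_i \mapsto \bar{b}_i + \bar{a}_i - \bar{a}_{i+1}$ and $\bar{b}_{i+1} \mapsto \bar{b}_{i+1} + \bar{a}_{i+1} - \bar{a}_i$, while fixing every $\bar{a}_j$ (which holds because $\langle \bar{a}_j, \bar{a}_i - \bar{a}_{i+1}\rangle = 0$).

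The only real obstacle is arithmetic bookkeeping of signs, and even that is minor. As an alternative style of writeup, one could avoid the word ``transvection'' altogether and simply verify $M^T J M = J$ by inspection for each of the four types of matrices $\pi(u_i)$; the transvection viewpoint has the advantage of making the relevant vector $w_i$ visible, which will also be useful in the subsequent analysis of the image for $g=2$.
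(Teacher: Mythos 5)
Your proof is correct, and the reduction to checking the generators $\pi(u_i)$ is exactly the paper's first step. Where you diverge is in how the check is carried out: the paper either invokes the geometric fact that the $u_i$ come from mapping classes of a surface, which are well known to act symplectically on homology, or else leaves to the reader the direct verification that each matrix $\pi(u_i)$ satisfies $M^T J M = J$. Your identification of each $\pi(u_i)$ as a symplectic transvection $T_w\colon v \mapsto v - \langle v,w\rangle w$ (with $w = \bar a_1$, $\bar a_g$, $\bar b_i$, $\bar a_i - \bar a_{i+1}$ respectively, all of which I have checked against \eqref{u1}--\eqref{uodd}) is a clean structured version of that second alternative, and it is in fact the homological shadow of the paper's first argument: the $u_i$ are lifts of Dehn twists along the curves $C_0, C'_i, C_i, C_g$ of Section~\ref{geom}, and a Dehn twist acts on $H_1$ precisely as the transvection along the homology class of the twisting curve, so your vectors $w_i$ are just the classes $[C_0] = \bar a_1$, $[C'_i] = \bar b_i$, $[C_i] = \bar a_i - \bar a_{i+1}$, $[C_g] = \bar a_g$. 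Your approach buys a uniform one-line verification (any $T_w$ preserves the form since $\langle w,w\rangle = 0$) in place of four separate matrix computations, at the modest cost of checking that each substitution really is a transvection; the paper's geometric appeal is shorter still but less self-contained.
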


\begin{proof}
It is enough to check that the image $\pi(u_i)$ in~$\GL_{2g}(\ZZ)$
of each automorphism~$u_i$ belongs to~$\Sp_{2g}(\ZZ)$.
Now the automorphisms~$u_i$ are induced by elements of a mapping class group 
which are well known to induce symplectic linear maps (see~\cite[Sect.~5.8]{MKS}).
Alternatively, one checks that each matrix~$\pi(u_i)$ satisfies
Relation\,\eqref{def-sympl}.
\end{proof}
 
When $g=1$, the symplectic group $\Sp_2(\ZZ)$ identifies naturally with the modular group~$\SL_2(\ZZ)$.
We proved in~\cite{KR} that the image $\overline{f}(B_4)$ is the entire
group~$\SL_2(\ZZ)$. 

\begin{remark}
Taking an adequate specialization of the Burau representation, 
Magnus and Peloso~\cite{MP} also constructed a homomorphism $B_{2g+2} \to \Sp_{2g}(\ZZ)$, 
for which they showed that it is surjective if and only if $g = 1$ or $g = 2$. 
We do not know if their symplectic representation is related to ours.
\end{remark}

\subsection{The case $g=2$}

We now consider the homomorphism $\overline{f} : B_{2g+2} \rightarrow \Sp_{2g}(\ZZ)$
in the special case~$g = 2$.

\begin{theorem}\label{thm-sp4}
The homomorphism $\overline{f} : B_6 \rightarrow \Sp_4(\ZZ)$ is surjective and 
its kernel is the normal subgroup of~$B_6$ generated by~$\Delta^2$, $\alpha^2$, $\alpha\beta$, $(\alpha\gamma)^2$, 
where
\begin{equation*}
\Delta =   \sigma_ 1\sigma_2 \sigma_3 \sigma_4 \sigma_5 \sigma_1 \sigma_2 \sigma_3 \sigma_4
\sigma_1 \sigma_2 \sigma_3 \sigma_1 \sigma_2 \sigma_1 \, ,
\end{equation*}
\begin{equation*}
\alpha = (\sigma_4 \sigma_5)^3 \, ,
\qquad
\beta = \sigma_3^{-1}(\sigma_1 \sigma_2)^3 \sigma_3 \, ,
\qquad
\gamma = \sigma_1 \sigma_3^{-1} \sigma_5 \, .
\end{equation*}
\end{theorem}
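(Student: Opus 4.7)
The plan is to split the theorem into its two halves---surjectivity and kernel description---and handle them by rather different methods.

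For surjectivity, the first step is to write down the five matrices $\overline f(\sigma_i)\in\Sp_4(\ZZ)$ using Formulas~\eqref{u1}--\eqref{uodd}. A direct computation identifies each as a symplectic transvection $T_v(x)=x+\langle v,x\rangle v$, with $v$ equal respectively to $\bar a_1$, $\bar b_1$, $\bar a_1-\bar a_2$, $\bar b_2$, $\bar a_2$ for $i=1,\ldots,5$. These five vectors form a chain for the symplectic form (consecutive ones pair to $\pm 1$, non-consecutive to~$0$), so the $\overline f(\sigma_i)$ are a Humphries-type chain of transvection generators of~$\Sp_4(\ZZ)$. Their generation can then be established either by an elementary root-subgroup argument, or geometrically by invoking Humphries's generating theorem for $\operatorname{Mod}(\Sigma_2)$ together with the natural surjection $\operatorname{Mod}(\Sigma_2)\twoheadrightarrow\Sp_4(\ZZ)$.

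For the easy inclusion $\langle\langle\Delta^2,\alpha^2,\alpha\beta,(\alpha\gamma)^2\rangle\rangle\subseteq\Ker\overline f$, the first generator is immediate: $\Delta^2$ generates the centre of~$B_6$ and hence lies in $\Ker f\subseteq\Ker\overline f$ by Proposition~\ref{vanish}. For the other three it suffices to assemble the $4\times 4$ matrices $\overline f(\alpha)$, $\overline f(\beta)$, $\overline f(\gamma)$ from the five generator matrices above and verify the three identities by direct multiplication.

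The substance of the theorem is the reverse inclusion. The strategy is to exhibit a finite presentation of~$\Sp_4(\ZZ)$ on five generators corresponding under $\overline f$ to $\sigma_1,\ldots,\sigma_5$ whose defining relations are exactly the braid relations~\eqref{braid1}--\eqref{braid2} together with the four given extra relations; granting this, the surjection $B_6/\langle\langle\Delta^2,\alpha^2,\alpha\beta,(\alpha\gamma)^2\rangle\rangle\twoheadrightarrow\Sp_4(\ZZ)$ is automatically an isomorphism. The most direct implementation is to start from a classical presentation of~$\Sp_4(\ZZ)$ (e.g.\ due to Bergau--Mennicke, Behr, or Magnus--Peloso), translate its generators into words in the $\sigma_i$ via the transvection dictionary above, and then derive each translated defining relation from~\eqref{braid1}--\eqref{braid2} and the four extra relations. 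A more geometric alternative is to use Birman--Hilden to identify $B_6/\langle\Delta^2\rangle$ with the hyperelliptic mapping class group of~$\Sigma_2$, reduce modulo its Torelli subgroup to land in~$\Sp_4(\ZZ)$, and identify the Torelli group in this quotient as normally generated by three explicit Dehn twists around separating curves matching $\alpha^2$, $\alpha\beta$ and $(\alpha\gamma)^2$ through the geometric dictionary of Section~\ref{geom}.

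The main obstacle in either approach is this last matching step. Bridging a standard presentation of $\Sp_4(\ZZ)$ (or equivalently a standard generating set for the hyperelliptic Torelli group) and the particular braid-word expressions $\alpha^2$, $\alpha\beta$, $(\alpha\gamma)^2$ requires nontrivial manipulation of long $B_6$ words---the centre element $\Delta^2$ alone involves a $15$-letter braid word---and the reduction of the Torelli generators to our three elements demands careful tracking of separating curves on~$\Sigma_2$ under the lift from the disk's mapping class group. I expect this combinatorial bookkeeping, rather than any conceptual ingredient, to be the delicate part of the proof.
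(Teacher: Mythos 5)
Your overall strategy for the hard direction is the same as the paper's: the paper likewise establishes the kernel statement by lifting the relators of Behr's presentation of $\Sp_4(\ZZ)$ to $B_6$ and reducing them to the four listed normal generators. Your surjectivity argument is a legitimate variant: the paper instead writes each of Behr's six generators explicitly as a word in the matrices $\overline{f}(\sigma_i)$ and $\overline{f}(\Delta)$, which has the added benefit of simultaneously producing the lifts needed for the kernel computation, whereas your transvection-chain/Lickorish argument is cleaner but yields no such lifts, so you would still have to construct them by hand.

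The genuine gap is that the step you defer as ``combinatorial bookkeeping'' is the entire content of the theorem. The four elements $\Delta^2$, $\alpha^2$, $\alpha\beta$, $(\alpha\gamma)^2$ enter your argument nowhere except in the easy inclusion, so nothing in the proposal actually connects them to $\Ker\overline{f}$. In the paper this step is the bulk of the proof: one computes the $18$ lifted relators $\gamma_1,\dots,\gamma_{18}\in B_6$, checks that $11$ of them are already trivial in $B_6$, shows (partly with computer assistance) that $\gamma_1$, $\gamma_{13}$, $\gamma_{14}$ are conjugates or inverses of a single element $\gamma_2$, identifies $\gamma_7=\alpha^2\Delta^2$, $\gamma_{10}=\alpha^2$ and $\gamma_{13}=\alpha\gamma\alpha^{-1}\gamma$, and finally proves by a delicate braid-word manipulation that $\gamma_{17}$ is conjugate to $\alpha\beta$ modulo the normal closure of the other three. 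None of this is routine, and it is exactly where the specific words $\Delta^2$, $\alpha^2$, $\alpha\beta$, $(\alpha\gamma)^2$ come from. Two further points need attention if you carry out your plan: (i) since Behr's generators are not the $\overline{f}(\sigma_i)$, concluding that $\Ker\overline{f}$ is the normal closure of the lifted relators requires checking that your chosen lifts generate $B_6$ (here one uses, e.g., that $\sigma_2=\Delta\sigma_4\Delta^{-1}$ is recoverable from the lifts); (ii) your ``geometric alternative'' via Birman--Hilden and the genus-two Torelli group is not obviously easier, since identifying the hyperelliptic relation and a normal generator of the Torelli group with the specific braid words above is the same kind of computation in disguise.
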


As a consequence, we obtain the following braid-type presentation of~$\Sp_4(\ZZ)$.

\begin{corollary}\label{coro-sp4}
The symplectic group $\Sp_4(\ZZ)$ has a presentation with generators $\sigma_1, \sigma_2, \sigma_3, \sigma_4, \sigma_5$
and $14$~relations consisting of Relations~\eqref{braid1}--\eqref{braid2} and of the four relations
\begin{equation*}
(\sigma_ 1\sigma_2 \sigma_3 \sigma_4 \sigma_5 \sigma_1 \sigma_2 \sigma_3 \sigma_4
\sigma_1 \sigma_2 \sigma_3 \sigma_1 \sigma_2 \sigma_1)^2 = 1 \, ,
\end{equation*}
\begin{equation*}
(\sigma_4 \sigma_5)^6 = 1\, ,
\qquad
(\sigma_1 \sigma_2)^3  = \sigma_3(\sigma_4 \sigma_5)^3 \sigma_3^{-1} \, ,
\end{equation*}
\begin{equation*}
(\sigma_1 \sigma_3^{-1} \sigma_5)^{-1} 
= (\sigma_4 \sigma_5)^3 (\sigma_1 \sigma_3^{-1} \sigma_5) (\sigma_4 \sigma_5)^{-3} \, .
\end{equation*}
\end{corollary}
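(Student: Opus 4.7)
The plan is to derive this corollary directly from Theorem~\ref{thm-sp4} by reading off a presentation of a quotient group. First I would recall that $B_6$ has the standard presentation on the five generators $\sigma_1, \ldots, \sigma_5$ subject to the $10$ relations~\eqref{braid1}--\eqref{braid2}: six commutation relations for the pairs $(i,j)$ with $|i-j|>1$, and four braid relations for $|i-j|=1$. Theorem~\ref{thm-sp4}, combined with the first isomorphism theorem, yields $\Sp_4(\ZZ) \cong B_6 / N$, where $N$ is the normal closure in $B_6$ of the four elements $\Delta^2$, $\alpha^2$, $\alpha\beta$, $(\alpha\gamma)^2$. Since adjoining the relations $r = 1$ for each normal generator $r$ of~$N$ turns a presentation of~$B_6$ into one of~$B_6/N$, we obtain a presentation of~$\Sp_4(\ZZ)$ on the same five generators with exactly $10 + 4 = 14$ relations.

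It then remains only to rewrite the four additional relations $\Delta^2 = 1$, $\alpha^2 = 1$, $\alpha\beta = 1$, $(\alpha\gamma)^2 = 1$ in the equivalent form displayed in the corollary. The first two are already in the stated form after expanding~$\Delta$ and~$\alpha$, the latter giving $\alpha^2 = (\sigma_4\sigma_5)^6$. For the third, from $\alpha^2 = 1$ we have $\alpha^{-1} = \alpha$, so $\alpha\beta = 1$ is equivalent to $\beta = \alpha$, that is to $\sigma_3^{-1}(\sigma_1\sigma_2)^3 \sigma_3 = (\sigma_4\sigma_5)^3$; conjugating by~$\sigma_3$ produces the third displayed relation. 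For the fourth, $(\alpha\gamma)^2 = 1$ reads $\alpha\gamma\alpha\gamma = 1$, and using $\alpha^{-1} = \alpha$ this rearranges to $\gamma^{-1} = \alpha\gamma\alpha^{-1}$, which is exactly the last displayed relation. The converse rewritings are immediate.

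There is no substantive obstacle in this argument: all the depth of the statement has already been absorbed into Theorem~\ref{thm-sp4}, and the corollary is nothing more than a translation of that theorem into an explicit finite presentation. The only point requiring a modicum of care is the bookkeeping used to trade the raw normal generators $\alpha\beta$ and $(\alpha\gamma)^2$ for their more symmetric reformulations, which is only legitimate in the presence of the relation $\alpha^2 = 1$; this is why the corollary states these four relations together rather than any proper subset of them.
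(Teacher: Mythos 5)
Your proposal is correct and matches the paper's (implicit) argument: the corollary is stated there as an immediate consequence of Theorem~\ref{thm-sp4}, obtained exactly as you describe by adjoining the four normal generators as relations to the standard $10$-relation presentation of~$B_6$ and rewriting $\alpha\beta=1$ and $(\alpha\gamma)^2=1$ using $\alpha^{-1}=\alpha$. Your remark that these rewritings are only valid in the presence of $(\sigma_4\sigma_5)^6=1$ is the one point of care, and you handle it correctly.
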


Previously
Behr~\cite{Be} gave a quite different presentation of this group, with six generators and $18$~relations
(see below);
Bender~\cite{Bn} improved Behr's presentation by reducing it to one with two generators and $8$~relations
(see also~\cite{BiS}).

\begin{proof}[Proof of Theorem~\ref{thm-sp4}]
The generators in Behr's presentation~\cite{Be} of~$\Sp_4(\ZZ)$ are the following symplectic matrices:
\begin{equation*}
x_{\beta} =
\begin{pmatrix}
 1 & 0 & 0 & 0\\
 0 & 1 & 0 & 1\\
 0 & 0 & 1 & 0\\
 0 & 0 & 0 & 1
\end{pmatrix},
\quad
x_{\alpha + \beta} =
\begin{pmatrix}
 1 & 0 & 0 & 1\\
 0 & 1 & 1 & 0\\
 0 & 0 & 1 & 0\\
 0 & 0 & 0 & 1
\end{pmatrix},
\quad
x_{2\alpha+\beta} =
\begin{pmatrix}
 1 & 0 & 1 & 0\\
 0 & 1 & 0 & 0\\
 0 & 0 & 1 & 0\\
 0 & 0 & 0 & 1
\end{pmatrix},
\end{equation*}
\begin{equation*}
x_{\alpha} =
\begin{pmatrix}
 1 & 1 & 0 & 0\\
 0 & 1 & 0 & 0\\
 0 & 0 & 1 & 0\\
 0 & 0 & -1 & 1
\end{pmatrix},
\quad
w_{\alpha} =
\begin{pmatrix}
 0 & -1 & 0 & 0\\
 1 & 0 & 0 & 0\\
 0 & 0 & 0 & -1\\
 0 & 0 & 1 & 0
\end{pmatrix},
\quad
w_{\beta} =
\begin{pmatrix}
 1 & 0 & 0 & 0\\
 0 & 0 & 0 & -1\\
 0 & 0 & 1 & 0\\
 0 & 1 & 0 & 0
\end{pmatrix}.
\end{equation*}
The images under~$\overline{f}$ of the elements $\sigma_1, \ldots, \sigma_5$, and~$\Delta$ of~$B_6$
are given by
\begin{equation*}
M_1 = \overline{f}(\sigma_1) = 
\begin{pmatrix}
 1 & 0 & 1 & 0\\
 0 & 1 & 0 & 0\\
 0 & 0 & 1 & 0\\
 0 & 0 & 0 & 1
\end{pmatrix},
\qquad
M_2 = \overline{f}(\sigma_2) = 
\begin{pmatrix}
 1 & 0 & 0 & 0\\
 0 & 1 & 0 & 0\\
 -1 & 0 & 1 & 0\\
 0 & 0 & 0 & 1
\end{pmatrix},
\end{equation*}
\begin{equation*}
M_3 = \overline{f}(\sigma_3) = 
\begin{pmatrix}
 1 & 0 & 1 & -1\\
 0 & 1 & -1 & 1\\
 0 & 0 & 1 & 0\\
 0 & 0 & 0 & 1
\end{pmatrix},
\qquad
M_4 = \overline{f}(\sigma_4) = 
\begin{pmatrix}
 1 & 0 & 0 & 0\\
 0 & 1 & 0 & 0\\
 0 & 0 & 1 & 0\\
 0 & -1 & 0 & 1
\end{pmatrix},
\end{equation*}
\begin{equation*}
M_5 = \overline{f}(\sigma_5) = 
\begin{pmatrix}
 1 & 0 & 0 & 0\\
 0 & 1 & 0 & 1\\
 0 & 0 & 1 & 0\\
 0 & 0 & 0 & 1
\end{pmatrix},
\qquad
M_{\Delta} = \overline{f}(\Delta) = 
\begin{pmatrix}
 0 & -1 & 0 & 0\\
- 1 & 0 & 0 & 0\\
 0 & 0 & 0 & -1\\
 0 & 0 & -1 & 0
\end{pmatrix}.
\end{equation*}
One checks that 
\begin{equation}\label{surj1}
x_{\beta} = M_5 \, , \qquad 
x_{\alpha + \beta} = M_1 M_3^{-1} M_5 \, ,\qquad 
x_{2\alpha + \beta} = M_1 \, ,
\end{equation}
\begin{equation}\label{surj2}
x_{\alpha} = M_5^{-1} M_4^{-1} M_1^{-1} M_3 M_5^{-1} M_4 M_5 \, , 
\end{equation}
\begin{equation}\label{surj3}
w_{\alpha} = (M_4 M_5)^3 M_{\Delta}\, , \qquad
w_{\beta} = (M_4 M_5 M_4)^{-1} \, .
\end{equation}
This shows that Behr's generators all belong to the image of
the homomorphism~$\overline{f}$,
which proves the surjectivity of~$\overline{f}$.

Each among the 18 relations in Behr's presentation yields a generator of the kernel of~$\overline{f}$
as follows: we first write each relation as an equality $r_i = 1$, where $r_i$ is a word in Behr's generators
and their inverses ($1Ê\leq i \leq 18$); 
then in view of\,\eqref{surj1}--\eqref{surj3}, 
we replace in~$r_i$ each generator by the following corresponding lift in~$B_6$:
\begin{equation*}
x_{\beta} \leadsto \sigma_5 \, , \qquad 
x_{\alpha + \beta} \leadsto \sigma_1 \sigma_3^{-1} \sigma_5 \, ,\qquad 
x_{2\alpha + \beta} \leadsto \sigma_1 \, ,
\end{equation*}
\begin{equation*}
x_{\alpha} \leadsto \sigma_5^{-1} \sigma_4^{-1} \sigma_1^{-1} \sigma_3 \sigma_5^{-1} \sigma_4 \sigma_5 \, , 
\end{equation*}
\begin{equation*}
w_{\alpha} \leadsto (\sigma_4 \sigma_5)^3 \Delta \, ,\qquad 
w_{\beta} \leadsto (\sigma_4 \sigma_5 \sigma_4)^{-1} \, .
\end{equation*}
In this way, from each relation~$r_i=1$ we obtain an element~$\gamma_i \in B_6$,
which belongs to the kernel of~$\overline{f}$. It follows from~\cite[Satz]{Be}
that the 18 elements $\gamma_1, \ldots, \gamma_{18}$ generate a subgroup
whose normal closure is the kernel of~$\overline{f}$.

It is easy to check that $\gamma_3$, $\gamma_4$, $\gamma_5$, $\gamma_6$, $\gamma_8$,
$\gamma_9$, $\gamma_{11}$, $\gamma_{12}$, $\gamma_{15}$, $\gamma_{16}$, and $\gamma_{18}$
are all equal to the trivial element of~$B_6$. 
Therefore, the kernel of~$\overline{f}$ is the normal closure of the subgroup generated by the remaining elements
$\gamma_1$, $\gamma_2$, $\gamma_7$, $\gamma_{10}$, $\gamma_{13}$, $\gamma_{14}$,~$\gamma_{17}$.

Let us now handle these seven elements one by one.
Behr's Relation\,(1) is equivalent to $r_1 = 1$, where 
$r_1 = x_{2\alpha+\beta}^{-1}x_{\alpha+ \beta}^{-1}  x_{\alpha} x_{\beta}  x_{\alpha}^{-1} x_{\beta}^{-1}$.
We thus have
\begin{equation}\label{g1}
\gamma_1 
= \sigma_1^{-1} (\sigma_5^{-1} \sigma_3 \sigma_1^{-1} ) 
(\sigma_5^{-1} \sigma_4^{-1} \sigma_1^{-1} \sigma_3 \sigma_5^{-1} \sigma_4 \sigma_5)
\sigma_5
(\sigma_5^{-1} \sigma_4^{-1} \sigma_1 \sigma_3^{-1} \sigma_5 \sigma_4 \sigma_5) \sigma_5^{-1} \, .
\end{equation}

Relation\,(2) in~\cite{Be} is equivalent to $r_2 = 1$, where 
$r_2 = x_{2\alpha+\beta}^{-2} x_{\alpha} x_{\alpha+ \beta}  x_{\alpha}^{-1} x_{\alpha+ \beta}^{-1}$.
Therefore,
\begin{multline}\label{g2}
\gamma_2 = \sigma_1^{-2}
(\sigma_5^{-1} \sigma_4^{-1} \sigma_1^{-1} \sigma_3 \sigma_5^{-1} \sigma_4 \sigma_5) \times \\
{}\times (\sigma_1 \sigma_3^{-1} \sigma_5)
(\sigma_5^{-1} \sigma_4^{-1} \sigma_1 \sigma_3^{-1} \sigma_5 \sigma_4 \sigma_5)
(\sigma_1^{-1} \sigma_3 \sigma_5^{-1}) \, .
\end{multline}
Using the braid applet of~\cite{DF}, we deduce from~\eqref{g1} and~\eqref{g2} that 
\begin{equation}\label{g1g2}
\gamma_1 = \sigma_3 \gamma_2\sigma_3^{-1} \, .
\end{equation}

Behr's Relation\,(7) is equivalent to $r_7 = 1$ with $r_7 = w_{\alpha} w_{\beta}^2 w_{\alpha} w_{\beta}^{-2}$.
We thus have
\begin{eqnarray*}
\gamma_7 
& = & (\sigma_4 \sigma_5)^3 \Delta (\sigma_4\sigma_5\sigma_4)^{-2}
(\sigma_4 \sigma_5)^3 \Delta (\sigma_4 \sigma_5 \sigma_4)^2 \\
& = & (\sigma_4 \sigma_5)^3 \Delta (\sigma_4\sigma_5\sigma_4)^{-2}
(\sigma_4 \sigma_5 \sigma_4)^2 \Delta (\sigma_4 \sigma_5)^3 \\
& = & (\sigma_4 \sigma_5)^3 \Delta^2 (\sigma_4 \sigma_5)^3 \, .
\end{eqnarray*}
Since $\Delta^2$ is central, as is well known (see~\cite{Bi2,KT}), we obtain
\begin{equation}\label{g7}
\gamma_7 = (\sigma_4 \sigma_5 )^6 \Delta^2 \, .
\end{equation}

Behr's Relation\,(10), which is equivalent to $w_{\beta}^{-4} = 1$, yields
\begin{equation}\label{g10}
\gamma_{10} = (\sigma_4 \sigma_5 \sigma_4 )^4  = (\sigma_4 \sigma_5)^6 \, .
\end{equation}

Relation\,(13) in~\cite{Be} is equivalent to $r_{13} = 1$, where 
$r_{13} = w_{\alpha} x_{\alpha+ \beta}  w_{\alpha}^{-1} x_{\alpha+ \beta}$.
Therefore, 
\begin{equation*}
\gamma_{13} 
= (\sigma_4 \sigma_5)^3 \Delta
(\sigma_1 \sigma_3^{-1} \sigma_5)
\Delta^{-1}(\sigma_4 \sigma_5)^{-3}
(\sigma_1 \sigma_3^{-1} \sigma_5) \, .
\end{equation*}
In view of the relations $\Delta \sigma_i = \sigma_{6-i} \Delta$ ($i= 1, \ldots, 5$), we obtain
\begin{equation}\label{g12}
\gamma_{13} 
=  (\sigma_4 \sigma_5)^3 
(\sigma_1 \sigma_3^{-1} \sigma_5) (\sigma_4 \sigma_5)^{-3}
(\sigma_1 \sigma_3^{-1} \sigma_5)  \, .
\end{equation}
Using again the braid applet~\cite{DF}, we find
\begin{equation}\label{g13}
\gamma_{13} = \gamma_2^{-1} \, .
\end{equation}

Relation\,(14) in~\cite{Be} is equivalent to $r_{14} = 1$, where 
$r_{14} = x_{\alpha} w_{\beta}^{-1} x_{\alpha+ \beta}^{-1} w_{\beta}$.
Therefore,
\begin{equation*}
\gamma_{14} 
= (\sigma_5^{-1} \sigma_4^{-1} \sigma_1^{-1} \sigma_3 \sigma_5^{-1} \sigma_4 \sigma_5)
(\sigma_4 \sigma_5 \sigma_4)
(\sigma_1^{-1} \sigma_3 \sigma_5^{-1})
(\sigma_4^{-1} \sigma_5^{-1} \sigma_4^{-1})
\, .
\end{equation*}
We similarly find 
\begin{equation}\label{g14}
\gamma_{14} = \gamma_2 = \gamma_{13}^{-1} \, .
\end{equation}

Relation\,(17) in~\cite{Be} 
is equivalent to $r_{17} = 1$ with
$r_{17} = w_{\alpha} x_{\alpha}  w_{\alpha}^{-1} x_{\alpha} w_{\alpha} x_{\alpha}$.
Thus, 
\begin{multline*}
\gamma_{17} 
 = (\sigma_4 \sigma_5)^3 \Delta
(\sigma_5^{-1} \sigma_4^{-1} \sigma_1^{-1} \sigma_3 \sigma_5^{-1} \sigma_4 \sigma_5)
\Delta^{-1}(\sigma_4 \sigma_5)^{-3} \times\\
\times ( \sigma_5^{-1} \sigma_4^{-1} \sigma_1^{-1} \sigma_3 \sigma_5^{-1} \sigma_4 \sigma_5 )
(\sigma_4 \sigma_5)^3 \Delta 
( \sigma_5^{-1} \sigma_4^{-1} \sigma_1^{-1} \sigma_3 \sigma_5^{-1} \sigma_4 \sigma_5) \, .
\end{multline*}
Letting both $\Delta$ jump to the right and letting the leftmost one merge with~$\Delta^{-1}$,
we obtain
\begin{multline}\label{g17}
\gamma_{17} 
 =  (\sigma_4 \sigma_5)^3 
 (\sigma_1 \sigma_2 \sigma_1^{-1} \sigma_3 \sigma_5^{-1} \sigma_2^{-1} \sigma_1^{-1})
(\sigma_4 \sigma_5)^{-3} \times\\
\times (\sigma_5^{-1} \sigma_4^{-1} \sigma_1^{-1} \sigma_3 \sigma_5^{-1} \sigma_4 \sigma_5)
(\sigma_4 \sigma_5)^3 
(\sigma_1 \sigma_2 \sigma_1^{-1} \sigma_3 \sigma_5^{-1}
 \sigma_2^{-1} \sigma_1^{-1}) \Delta
\, .
\end{multline}

It follows from~\eqref{g1g2}, \eqref{g13}, \eqref{g14}
that the kernel of~$\overline{f}$ is the normal closure of the subgroup generated
by $\gamma_7$, $\gamma_{10}$, $\gamma_{13}$, and $\gamma_{17}$. 
Now, by~\eqref{g7}--\eqref{g12} we have $\gamma_7 = \alpha^2 \Delta^2$, $\gamma_{10} = \alpha^2$, and
\begin{equation*}
\gamma_{13} 
= \alpha \gamma \alpha^{-1} \gamma 
= (\alpha\gamma)^2 \, \gamma^{-1} (\alpha^2)^{-1} \gamma \, ,
\end{equation*}
where $\Delta$, $\alpha$, $\gamma$ have been defined in the statement of the theorem.
Hence, the kernel of~$\overline{f}$ is generated as a normal subgroup 
by $\Delta^2$, $\alpha^2$, $(\alpha\gamma)^2$, and~$\gamma_{17}$.
To complete the proof of the theorem, we consider the normal subgroup~$N$ of~$B_6$
generated by $\Delta^2$, $\alpha^2$, and~$(\alpha\gamma)^2$, and we show
that $\gamma_{17}$ is conjugate to~$\alpha\beta$ modulo~$N$.

Let us now prove this. From\,\eqref{g17} we first derive
\begin{equation*}
\gamma_{17}
= \sigma_1 \sigma_2 \alpha \gamma^{-1} \sigma_2^{-1} \sigma_1^{-1}
\alpha^{-1} \sigma_5^{-1} \sigma_4^{-1} \gamma^{-1} \sigma_4 \sigma_5 \alpha
\sigma_1 \sigma_2 \gamma^{-1} \sigma_2^{-1} \sigma_1^{-1} \Delta \, .
\end{equation*}
Now, $(\alpha \gamma)^2 \equiv 1$ modulo~$N$. Hence, $\gamma^{-1} \equiv \alpha \gamma\alpha$.
Therefore, in view of this and of~$\alpha^2 \equiv 1$, we obtain
\begin{eqnarray*}
\gamma_{17}
& \equiv & \sigma_1 \sigma_2 \alpha^2 \gamma \alpha \sigma_2^{-1} \sigma_1^{-1}
\alpha^{-1} \sigma_5^{-1} \sigma_4^{-1} \alpha \gamma\alpha \sigma_4 \sigma_5 \alpha
\sigma_1 \sigma_2 \alpha \gamma\alpha^{-1} \alpha^2 \sigma_2^{-1} \sigma_1^{-1} \Delta \\
& \equiv & \sigma_1 \sigma_2 \sigma_1 \sigma_3^{-1} \sigma_2^{-1}  \sigma_1^{-1}
\sigma_5\sigma_5^{-1} \sigma_4^{-1} \alpha \gamma \sigma_4 \sigma_5
\sigma_1 \sigma_2 \alpha \gamma\alpha^{-1} \sigma_2^{-1} \sigma_1^{-1} \Delta \\
& \equiv & \sigma_1 \sigma_2 \sigma_1 \sigma_3^{-1} \sigma_2^{-1}\sigma_1^{-1} \sigma_4^{-1} 
\alpha \sigma_1 \sigma_3^{-1} \sigma_5 \sigma_4 \sigma_5
\sigma_1 \sigma_2 \alpha \gamma\alpha^{-1} \sigma_2^{-1} \sigma_1^{-1} \Delta \\
& \equiv & (\sigma_1 \sigma_2 \sigma_1 \sigma_3^{-1} \sigma_2^{-1} \sigma_4^{-1} )
\alpha \sigma_3^{-1} \sigma_1 \sigma_2  \sigma_5 \sigma_4 \sigma_5 \alpha \gamma
(\alpha^{-1} \sigma_2^{-1} \sigma_1^{-1} \Delta) 
\end{eqnarray*}
modulo~$N$.
Using the applet~\cite{DF}, we obtain
\begin{equation*}
\alpha^{-1} \sigma_2^{-1} \sigma_1^{-1} \Delta =
\sigma_3 \sigma_4 \sigma_5 \sigma_2 \sigma_1 \sigma_2 \sigma_3 
(\sigma_1 \sigma_2 \sigma_1 \sigma_3^{-1} \sigma_2^{-1}\sigma_4^{-1})^{-1} \, .
\end{equation*}
Therefore, $\gamma_{17} \equiv 
(\sigma_1 \sigma_2 \sigma_1 \sigma_3^{-1} \sigma_2^{-1}\sigma_4^{-1}) \, \gamma' \, 
(\sigma_1 \sigma_2 \sigma_1 \sigma_3^{-1} \sigma_2^{-1}\sigma_4^{-1})^{-1}$,
where
\begin{eqnarray*}
\gamma'
& = & \alpha \sigma_3^{-1} \sigma_1 \sigma_2  \sigma_5 \sigma_4 \sigma_5 \alpha \gamma
\sigma_3 \sigma_4 \sigma_5 \sigma_2 \sigma_1 \sigma_2 \sigma_3 \\
& = & \alpha \sigma_3^{-1}( \sigma_1 \sigma_2  \sigma_5 \sigma_4 \sigma_5 \alpha \sigma_5
\sigma_1 \sigma_3^{-1} \sigma_3 \sigma_4 \sigma_5 \sigma_2 \sigma_1 \sigma_2) \sigma_3 \\
& = & \alpha \sigma_3^{-1}( (\sigma_1 \sigma_2)  (\sigma_5 \sigma_4 \sigma_5 \alpha \sigma_5
\sigma_4 \sigma_5) (\sigma_1\sigma_2)^2) \sigma_3 \\
& = & \alpha \sigma_3^{-1}( (\sigma_1 \sigma_2)  \alpha^2 (\sigma_1\sigma_2)^2) \sigma_3 \\
& \equiv & \alpha \sigma_3^{-1} (\sigma_1 \sigma_2)^3 \sigma_3 = \alpha\beta\, .
\end{eqnarray*}
This completes the proof of Theorem\,\ref{thm-sp4}.
\end{proof}

\begin{remark}\label{rem-auto}
We may wonder whether the four elements $\Delta^2$, $\alpha^2$, $\alpha\beta$, $(\alpha\gamma)^2$
of the kernel of~$\overline{f}$ already belong to the kernel of~$f$.
This holds true for the central element~$\Delta^2$: 
indeed, $f(\Delta^2) = \id$ by Proposition~\ref{vanish}.
The other three elements 
map under~$f$ to non-inner automorphisms of~$F_4$.
For instance, the automorphism $f(\alpha^2)$ is the following: 
it fixes~$a_1$ and~$b_1$, and on~$a_2$ and~$b_2$
we have
\begin{equation*}
a_2  \mapsto  (a_2^{-1} b_2 a_2 b_2^{-1}) a_2 (a_2^{-1} b_2 a_2 b_2^{-1})^{-1}
\quad\text{and}\quad
b_2  \mapsto  (a_2^{-1} b_2 a_2 ) b_2 (a_2^{-1} b_2 a_2 )^{-1} \, .
\end{equation*}
\end{remark}

\section*{Acknowledgments}

I am grateful to Etienne Ghys and Vladimir Turaev for their hints,
and to the referees for useful remarks.
I am also indebted to Pierre Guillot who computed certain automorphisms $f(\beta) \in \Aut(F_4)$ using GAP.


\begin{thebibliography}{99}


\bibitem{Be}
H.~Behr,
Eine endliche Pr\"asentation der symplektischen Gruppe~$\Sp_4(\ZZ)$,
{\it Math.~Z.}~{\bf 141} (1975), 47--56.

\bibitem{Bn}
P.~Bender,
Eine Pr\"asentation der symplektischen Gruppe~$\Sp_4(\ZZ)$
mit 2~Erzeugenden und $8$~definierenden Relationen,
{\it J.~Algebra}~{\bf 65} (1980), 328--331.

\bibitem{BM}
P.~Bergau, J.~Mennicke, 
\"Uber topologische Abbildungen der Brezelfl\"ache vom Geschlecht~$2$,
{\it Math.~Z.}~{\bf 74} (1960), 414--435.

\bibitem{Bi1} 
J. S. Birman,
Automorphisms of the fundamental group of a closed, orientable  $2$-manifold,  
{\it Proc.\ Amer.\ Math.\ Soc.}~{\bf 21} (1969) 351--354.

\bibitem{BiS}
J. S. Birman,
On Siegel's modular group,
{\it Math.\ Ann.}~{\bf 191} (1971), 59--68. 

\bibitem{Bi2} 
J. S. Birman,
{\it Braids, Links and Mapping Class Groups},
Annals of Math.\ Studies, No.~82 (Princeton University Press, Princeton,~1975).

\bibitem{DF}
P.~Dehornoy, J.~Fromentin, A braid applet,
www.math.unicaen.fr/\raise-2pt\hbox{\~{}}tressapp/index.html.


\bibitem{Go}
L.~Goeritz, 
Die Abbildungen der Brezelfl\"ache und der Vollbrezel vom Geschlecht~$2$,
{\it Abh.\ Math.\ Sem.\ Univ.\ Hamburg}~{\bf 9} (1933), 244--259.


\bibitem{KR} 
C.~Kassel, C.~Reutenauer, 
Sturmian morphisms, the braid group $B_4$, Christoffel words and bases of~$F_2$,
{\it Ann.\ Mat.\ Pura Appl.\ (4)}~{\bf 186} (2007) 317--339.

\bibitem{KT}
C.~Kassel, V.~Turaev, 
{\it Braid groups},
Grad.\ Texts in Math., 
Vol.~247 (Springer, New York,~2008).

\bibitem{Lo} 
M. Lothaire,
\emph{Algebraic combinatorics on words}, 45--110
(Cambridge University Press, Cambridge,~2002).

\bibitem{MKS} 
W. Magnus, A.~Karrass, D.~Solitar,
{\it Combinatorial group theory}
(John Wiley and Sons, New York,~1966).

\bibitem{MP} 
W. Magnus, A.~Peluso,
On a theorem of V. I. Arnol'd, 
{\it Comm.\ Pure Appl.\ Math.}~{\bf 22} (1969) 683--692.

\end{thebibliography}
\end{document}